\documentclass{article}
\usepackage[centertags]{amsmath}
\usepackage{amsfonts}
\usepackage{amssymb}
\usepackage{amsthm}
\usepackage{newlfont}
\usepackage[top=1in, bottom=1in, left=1in, right=1in]{geometry}
\usepackage{tikz}

%

%
\theoremstyle{plain}
\newtheorem{theorem}{Theorem}
\newtheorem*{theorem*}{P\'olya's Theorem}
\newtheorem{proposition}{Proposition}[section]
\newtheorem{corollary}{Corollary}[section]
\newtheorem{lemma}{Lemma}[section]

\theoremstyle{definition}

\newcommand{\N}{\mathbb{N}}
\newcommand{\Z}{\mathbb{Z}}

\newcommand{\Hidden}[1]{}

\setcounter{MaxMatrixCols}{20}

\DeclareMathOperator{\Tr}{Trace}

\begin{document}

\title{A non-backtracking P\'olya's theorem}
\author{Mark Kempton\footnote{Center of Mathematical Sciences and Applications, Harvard University, Cambridge, MA.  Email: mkempton@cmsa.fas.harvard.edu}}
\date{}

\maketitle

\begin{abstract}
P\'olya's random walk theorem states that a random walk on a $d$-dimensional grid is recurrent for $d=1,2$ and transient for $d\ge3$.  We prove a version of P\'olya's random walk theorem for non-backtracking random walks. Namely, we prove that a non-backtracking random walk on a $d$-dimensional grid is recurrent for $d=2$ and transient for $d=1$, $d\ge3$. Along the way, we prove several useful general facts about non-backtracking random walks on graphs.  In addition, our proof includes an exact enumeration of the number of closed non-backtracking random walks on an infinite 2-dimensional grid.  This enumeration suggests an interesting combinatorial link between non-backtracking random walks on grids, and trinomial coefficients.  
\end{abstract}

\section{Introduction}
P\'olya's celebrated random walk theorem, first proven by Goerge P\'olya in 1921 in \cite{polya}, characterizes the behavior of random walks on infinite grids of all dimensions.  We say that a random walk on an infinite graph is \emph{recurrent} if the random walk is guaranteed (with probability 1) to return to its starting point, and is called \emph{transient} if there is a positive probability that the random walk never returns to its starting point.
\begin{theorem*}

A simple random walk on the infinite grid $\Z^d$ is recurrent for $d=1,2$ and transient for $d\geq3$.

\end{theorem*}

Since P\'olya's paper in 1921, this theorem has become a standard example in probability theory and has motivated considerable work in random walks, including numerous generalizations and variations.  See in particular \cite{moore} and \cite{sabot}.

Our goal is to prove an analogous result for non-backtracking random walks, which are random walks with the extra condition that they are not permitted to return to the vertex from the immediately previous step.  Extensive research has been done recently in the study of non-backtracking random walks.  The convergence and mixing rate of non-backtracking random walks is studied in \cite{alon, cioaba, Fitzner} and \cite{ihara}. The distribution of the number of visits of a random walk to a vertex is studied in \cite{alon2}, and \cite{angel} studies non-backtracking random walks on the universal cover of a graph.    In \cite{redemption}, non-backtracking random walks are used to study spectral clustering algorithms.  A non-backtracking random walk on a graph is not a Markov chain when the state space is taken to be the vertex set of the graph, but can be turned into a Markov chain by thinking of the walk as moving along directed edges of the graph.  In particular, \cite{angel, ihara} and \cite{redemption} take this approach.

The main result of this paper is to characterize recurrence and transience of non-backtracking random walks on infinite grids of all dimensions (i.e., the non-backtracking version of P\'olya's theorem).  Our result is as follows.

\begin{theorem}[Non-backtracking P\'olya's theorem]\label{thm:nbp}
A non-backtracking random walk on the infinite grid $\Z^d$ is recurrent for $d=2$ and transient for $d=1$ and $d\geq3$.
\end{theorem}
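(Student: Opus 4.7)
The plan is to first set up a suitable recurrence/transience criterion for non-backtracking random walks. Since the walk becomes a Markov chain on the set of directed edges, as noted in the introduction, I would adapt the classical Borel--Cantelli / Green's function argument to this edge Markov chain: on a vertex-transitive graph such as $\Z^d$, the walk is recurrent if and only if $\sum_{n\geq 0} p_n = \infty$, where $p_n$ is the probability that the walk, started from a fixed directed edge based at the origin, has its head at the origin after $n$ steps. The case $d=1$ follows immediately: once the first step is taken, the walk is forced to continue in the same direction forever, so it never returns, giving transience.

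For $d=2$, the plan is to perform an exact enumeration of closed non-backtracking walks of length $n$ on $\Z^2$. I would encode each walk by its sequence of turns in $\{\text{left, straight, right}\}$; the current facing direction after $i$ steps is then determined by the running sum $S_i=\sum_{j\leq i}\tau_j\pmod 4$ under the assignment $\text{straight}\mapsto 0$, $\text{left}\mapsto 1$, $\text{right}\mapsto -1$. Closed walks correspond to turn sequences satisfying balance conditions on the residues of $S_i$ modulo $4$ (equivalently, on the number of times each of the four compass directions is used), which one can show reduces to a counting problem whose solution is expressed in terms of trinomial coefficients $[x^k](1+x+x^2)^n$. Using the central-limit asymptotic $[x^n](1+x+x^2)^n\sim 3^n\sqrt{3/(4\pi n)}$ for the central trinomial coefficient and dividing by the $3^{n-1}$ normalization for probabilities, one obtains $p_n\asymp 1/n$, so $\sum p_n=\infty$ and the walk is recurrent.

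For $d\geq 3$, I would switch to Fourier analysis on $\Z^d$. The edge-transition operator $T$ commutes with lattice translations, so the Fourier transform block-diagonalizes it into a family of $2d\times 2d$ matrices $\widehat{T}(\theta)$, $\theta\in[-\pi,\pi]^d$, each governing the action of $T$ on the fibre over frequency $\theta$. The return probability $p_n$ can then be written as an integral of a matrix entry of $\widehat{T}(\theta)^n$, and summability of $\sum p_n$ reduces to integrability over the torus of an entry of the resolvent $(I-\widehat{T}(\theta))^{-1}$. A local analysis near $\theta=0$ (the unique point at which $1$ is an eigenvalue of $\widehat{T}(\theta)$) shows the singular behavior is of order $|\theta|^{-2}$, the same order as in the classical simple random walk, and this is integrable on $[-\pi,\pi]^d$ precisely when $d\geq 3$, yielding transience. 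The main obstacles will be the combinatorial step in $d=2$ — both the identification of closed walks with a trinomial-counted set and the extraction of the sharp $1/n$ asymptotic — together with the matrix-valued local expansion in $d\geq 3$, where one must verify that no other unit-modulus eigenvalues of $\widehat{T}(\theta)$ arise away from the origin to spoil integrability.
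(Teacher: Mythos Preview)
Your plan is sound in outline, and the $d=1$ remark matches the paper, but in both nontrivial regimes you take a genuinely different route. For $d\geq 3$ the paper does not Fourier-analyse the edge chain at all; instead it uses the three-term recurrence $\tilde P^{(k+2)}=\frac{r}{r-1}P\tilde P^{(k+1)}-\frac{1}{r-1}\tilde P^{(k)}$ to write $\tilde P^{(k)}=p_k(P)$ for an explicit family of polynomials, proves the pointwise bound $|p_k(\lambda)|\leq |\lambda|^k$ (up to a geometrically decaying correction for small $|\lambda|$), and then evaluates $\tilde p(2k)$ as a normalized trace on a large finite torus so as to compare $\sum_k \tilde p(2k)$ term-by-term with the simple-walk series $\sum_k p(2k)$, already known to converge by the classical P\'olya theorem. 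This bypasses your resolvent analysis and in particular the need to exclude unit-modulus eigenvalues of $\widehat T(\theta)$ away from $\theta=0$; the cost is that it is a comparison argument, not a self-contained one. For $d=2$ both approaches arrive at central trinomial coefficients, but by different mechanisms: the paper plugs the known count $A^{2n}(v,v)=\binom{2n}{n}^2$ into its polynomial expression for $\tilde A^{(2n)}$ and then invokes a hypergeometric identity of Sun to recognise the resulting alternating sum as $T_n^2-T_{n-1}^2$, whereas you propose to reach the trinomials directly from the turn encoding. Be warned that the paper explicitly flags a bijective explanation of the identity $\tilde A^{(2n)}(v,v)=T_n^2-T_{n-1}^2$ as an open problem of independent interest, so the combinatorial reduction you are counting on in $d=2$ is not routine; if it resists, the paper's algebraic route via the recurrence and Sun's identity is a reliable fallback.
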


Our proof involves two main sections: proving transience for $d\ge3$ and proving recurrence for $d=2$.  We remark that for the case $d=1$, it is trivial to see that a non-backtracking random walk is transient, since, after the first step, the walk is forced into a single direction.

For $d\geq3$, we are able to compare the probability of a non-backtracking walk returning to its starting point to the probability of a simple random walk returning to its starting point, which will give transience.  Along the way, we record several general facts about non-backtracking random walks that may be useful for future work in studying this topic.  In particular, Lemmas \ref{lem:recurrence}, \ref{lem:A^n}, and \ref{lem:eig_asympt}, as well as equation (\ref{eq:gen_func}) are likely to be of general interest to anyone wishing to study non-backtracking random walks.

For $d=2$, the situation is more complicated, and we produce an exact expression for the number of closed non-backtracking random walks of a given length, and analyze this expression.  In doing this, we discover an interesting combinatorial connection between non-backtracking random walks in the 2-d grid, and central trinomial coefficients.  

The remainder of this paper is organized as follows.  In section \ref{sec:prelim}, we will give background on P\'olya's theorem and aspects of its proof that will be useful to us later, as well as background on non-backtracking random walks, including some facts of general interest about them.  Section \ref{sec:d>3} will develop some general techniques for non-backtracking random walks on regular graphs, from which we will be able to deduce the main result for $d\ge3$ via comparison with simple random walks.  Finally, in section \ref{sec:Z2}, we will finish the proof of Theorem \ref{thm:nbp} by covering the $d=2$ case.  

\section{Preliminaries}\label{sec:prelim}

Let $G$ be a graph.  A \emph{random walk} on $G$ of length $k$ is a sequence of vertices $(v_0,v_1,...,v_k)$ in which the vertex $v_{i+1}$ is chosen uniformly at random from among the neighbors of $v_i$.  We will use the terminology \emph{simple random walk} to specify the usual, unrestricted random walk (as opposed to a non-backtracking random walk).  See \cite{lovasz} for a good introduction to the theory of random walks on graphs.

\subsection{P\'olya's Theorem}\label{sub:polya}

Suppose $G$ is a graph with infinitely many vertices.  Consider a random walk on $G$ starting at some initial vertex $v_0$.  The random walk on $G$ is called \emph{recurrent} if the probability that the walk eventually returns to $v_0$ is 1.  If this probability is less than one, the random walk is called \emph{transient}.  As mentioned, P\'olya's Theorem says that a simple random walk on $\Z^d$ is transient for $d\ge3$ and recurrent for $d=1,2$.

P\'olya's Theorem is well known, and numerous proofs exist in the literature.  For instance, \cite{polya} has the original proof by P\'olya, various proofs coming from the theory of electrical networks and random walks can be found in \cite{doyle, doylesnell, tetali}, and \cite{novak} has a proof involving methods from special function theory. In this section, we will sketch some the ideas of a classical proof of P\'olya's Theorem based on enumerating walks on the grid.  We mention this proof here because some of the ideas therein will be useful to us later on.

Let $p(k)$ denote the probability that a random walk returns to its starting vertex after $k$ steps.  The key to the proof of P\'olya's Theorem is the following lemma, which is well-known (see \cite{asymptopia}, for example).

\begin{lemma}\label{lem:conv}
If the sum
\[
\sum_{k=0}^\infty p(k)
\]
is convergent, then the random walk is transient.  Otherwise, it is recurrent.
\end{lemma}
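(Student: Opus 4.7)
The plan is to interpret the series $\sum_{k=0}^\infty p(k)$ as the expected total number of visits of the walk to its starting vertex $v_0$, and to compute this expectation a second way by exploiting the (strong) Markov property of the simple random walk. Equating the two expressions will directly tie convergence of the series to whether the return probability equals $1$.

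For the first computation, let $X_k$ be the indicator random variable that the walk is at $v_0$ at step $k$, so $\mathbb{E}[X_k] = p(k)$. Setting $N = \sum_{k=0}^\infty X_k$, the total number of visits to $v_0$ (including the start at $k=0$), monotone convergence gives
\[
\mathbb{E}[N] = \sum_{k=0}^\infty p(k).
\]
Thus the series converges if and only if $\mathbb{E}[N] < \infty$, and divergence of the series is the statement that the walk visits $v_0$ infinitely often in expectation.

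For the second computation, let $r$ denote the probability that the walk started at $v_0$ ever returns to $v_0$. Applying the strong Markov property at the first return time, the walk after that time is an independent copy of a random walk started at $v_0$, so the event of achieving a second return has probability $r$ independently of the first return, and by induction the probability of at least $j$ returns is $r^j$. Hence $N-1$ is a geometric random variable, and $\mathbb{E}[N] = 1/(1-r)$ if $r<1$ while $\mathbb{E}[N] = \infty$ if $r=1$. Comparing with the previous paragraph, $\sum_{k=0}^\infty p(k) < \infty$ iff $r<1$, which is exactly the statement of the lemma.

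The only delicate step is the strong Markov argument producing the geometric distribution of the number of returns: one must verify that, conditional on the first return time being finite, the post-return trajectory is independent of the past and distributed as a fresh walk from $v_0$. For simple random walks on a locally finite graph this is standard and follows immediately from the definition of the walk, so no serious obstacle arises; the rest is bookkeeping.
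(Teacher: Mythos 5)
Your proof is correct and is precisely the standard argument the paper alludes to: it cites this lemma as well-known and only sketches the intuition that $\sum_k p(k)$ is the expected number of returns to the origin. Your write-up simply fills in that intuition rigorously (monotone convergence for the expectation, the strong Markov property for the geometric distribution of the number of returns), so it takes essentially the same approach.
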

The intuition behind this lemma is that $\sum_kp(k)$ represents the expected number of times a random walk returns to the origin.  If this is infinite, the random walk must always return; if it is finite, then it is possible that it does not return.  Therefore, to prove recurrence or transience of a random walk, one approach is to enumerate the total number of walks of length $k$ on the graph, and enumerate the total number of walks of length $k$ that return to the initial vertex at step $k$, then from this obtain the probability $p(k)$, and analyze the series.  

For $d=1$, it is not hard to see that there are $\binom{2n}{n}$ closed walks of length $2n$ on $\Z$, choosing $n$ steps in one direction, the other $n$ being fixed in the other direction. There are $2^{2n}$ total walks of length $2n$, so that \[p(2n) = \frac{1}{2^{2n}}\binom{2n}{n}.\] (Note that it is clear that $p(2n+1) =0$.)  Using Stirling's formula, we can see that $p(2n) \sim \frac{1}{\sqrt{\pi n}}$, so the series from Lemma \ref{lem:conv} is divergent, and thus the random walk is recurrent.

For $d=2$, in a similar manner we have
\[
p(2n) = \frac{1}{4^{2n}}\sum_{k=0}^n\frac{(2n)!}{k!k!(n-k)!(n-k)!} = \frac{1}{4^{2n}}\binom{2n}{n}\sum_{k=0}^n\binom{n}{k}^2 = \frac{1}{4^{2n}}\binom{2n}{n}^2
\]
using the basic combinatorial fact that $\sum_k\binom{n}{k}^2 = \binom{2n}{n}$.  Again, Stirling's formula gives $p(2n)\sim \frac{1}{\pi n}$ so the series from Lemma \ref{lem:conv} diverges, and the random walk is recurrent.

For $d\ge3$, similar combinatorial formulas can be obtained, and more involved manipulation of these shows that \[p(2n) \sim \frac{c}{(\pi n)^{d/2}}.\] Therefore, for $d\ge3$ the series from Lemma \ref{lem:conv} is convergent, so the walk is transient. The details will not be needed here, but can be found in many probability texts (see \cite{prob,asymptopia} for example).

\subsection{Non-backtracking Random Walks}

A non-backtracking random walk on a graph $G$ is a sequence of vertices $(v_0, v_1, ... v_k)$ such that $v_{i+1}$ is chosen randomly among the neighbors of $v_{i}$ such that $v_{i+1}\neq v_{i-1}$.  

Define the matrix $\tilde A^{(k)} = \tilde A^{(k)}(G)$ by setting $\tilde A^{(k)}(u,v)$ to be the number of non-backtracking walks of length $k$ from vertex $u$ to vertex $v$.  Let $A$ denote the adjacency matrix of $G$.  Where the entries of $A^k$ count walks on a graph, the entries of $\tilde A^{(k)}$ count non-backtracking walks on the graph.  Let $D$ be the degree matrix of $G$, that is, $D$ is diagonal with $D(v,v) = d_v$ for each vertex $v$ of $G$.  In \cite{alon}, a recurrence is given for $\tilde A^{(k)}$ for the case of regular graphs.  Here we generalize it for arbitrary graphs.

\begin{lemma}\label{lem:recurrence}
 The matrices $\tilde A^{(k)}$ satisfy the recurrence
\[\begin{cases}
\tilde A^{(1)} = A\\ \tilde A^{(2)} = A^2-D\\ \tilde A^{(k+2)} = A\tilde A^{(k+1)} - (D-I)\tilde A^{(k)}
\end{cases}\]
\end{lemma}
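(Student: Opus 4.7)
My plan is to prove the recurrence by a direct combinatorial counting argument on walks, matching entries on both sides. The base cases are immediate and the inductive step reduces to identifying and counting the ``backtracking'' walks that appear spuriously when one naively multiplies $A$ by $\tilde A^{(k+1)}$.

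\textbf{Base cases.} For $\tilde A^{(1)}=A$, any walk of length $1$ is automatically non-backtracking, so this is immediate. For $\tilde A^{(2)}=A^2-D$, I compute entries: if $u\neq v$, then $A^2(u,v)$ counts length-$2$ walks $u\to w\to v$, each of which is non-backtracking since $v\neq u$, and $D(u,v)=0$. If $u=v$, then every length-$2$ walk $u\to w\to u$ backtracks, so $\tilde A^{(2)}(u,u)=0$, while $A^2(u,u)-D(u,u)=d_u-d_u=0$. Both cases match.

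\textbf{Inductive step.} I count non-backtracking walks of length $k+2$ from $u$ to $v$ by conditioning on the first step $u\to w$. The remaining portion is a non-backtracking walk of length $k+1$ from $w$ to $v$ which, to make the whole walk non-backtracking, must have \emph{second} vertex different from $u$. Thus
\[
\tilde A^{(k+2)}(u,v) \;=\; \sum_{w\sim u}\tilde A^{(k+1)}(w,v) \;-\; B(u,v),
\]
where $B(u,v)$ counts pairs (a neighbor $w$ of $u$, a non-backtracking walk of length $k+1$ from $w$ to $v$) whose second vertex \emph{is} $u$. The first sum is exactly $(A\tilde A^{(k+1)})(u,v)$.

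\textbf{Computing $B(u,v)$.} A walk contributing to $B$ has the form $w\to u\to x\to\cdots\to v$ where the tail $u\to x\to\cdots\to v$ is a non-backtracking walk of length $k$ from $u$ to $v$ with second vertex $x$, and we require $x\neq w$ (so the full length-$(k+2)$ walk does not backtrack at step two). For each non-backtracking walk of length $k$ from $u$ to $v$, the second vertex $x$ is some specific neighbor of $u$, and the number of admissible choices of $w\sim u$ with $w\neq x$ is exactly $d_u-1$. Summing, $B(u,v)=(d_u-1)\tilde A^{(k)}(u,v)$. Written in matrix form, this contribution is $(D-I)\tilde A^{(k)}(u,v)$, giving
\[
\tilde A^{(k+2)} \;=\; A\tilde A^{(k+1)} - (D-I)\tilde A^{(k)},
\]
as claimed.

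\textbf{Expected obstacle.} The only subtle point is the bookkeeping in step three: one must be careful to (i) not double-count or miscount when $w=x$, (ii) confirm that the factor $d_u-1$ is independent of which length-$k$ non-backtracking tail is chosen (it is, because it depends only on $u$ and the identity of the forbidden neighbor $x$, and for each fixed $x$ there are exactly $d_u-1$ allowed $w$'s), and (iii) get the correct left/right matrix multiplication order, which is why I conditioned on the first step (giving $A\tilde A^{(k+1)}$) rather than the last. Everything else is routine.
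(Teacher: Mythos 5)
Your proof is correct and takes essentially the same approach as the paper: a direct count with an inclusion--exclusion on one peeled-off edge, the paper peeling off the \emph{last} step where you peel off the \emph{first}. Your choice of direction in fact produces the product in the stated order $A\tilde A^{(k+1)} - (D-I)\tilde A^{(k)}$ directly, whereas the paper's last-step version literally yields $\tilde A^{(k+1)}A - \tilde A^{(k)}(D-I)$ and silently relies on the symmetry of the matrices involved.
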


\begin{proof}
Since the non-backtracking condition puts no restriction on the first step, it is clear that $\tilde A^{(1)} = A$.  For $\tilde A^{(2)}$, note that $A^2$ counts all walks of length 2, so we must simply subtract off those that backtrack. The only walks of length 2 that backtrack are those that move from a vetex to a neghbor, then return immediately.  For a vertex $x$, there are clearly $d_x$ such walks, so this is what must be subtracted from the diagonal.

For ease of notation, we will use the symbol $\tilde A^{(k)}_x(u,v)$ to denote the number of non-backtracking walks of length $k$ starting at vertex $u$ and ending at vertex $v$ and that pass through vertex $x$ at step $k-1$.  To obtain $\tilde A^{(k+2)}(u,v)$, we add the number of non-backtracking walks of length $k+1$ from $u$ to each neighbor of $v$, and then subtract those that backtracked, that is, those that visited $v$ at step $k$.  More specifically,
\[\begin{split}
\tilde A^{(k+2)}(u,v) &= \sum_{x\sim v}\left( \tilde A^{(k+1)}(u,x) - \tilde A^{(k+1)}_v(u,x) \right) \\
&= \sum_{x\sim v}\left( \tilde A^{(k+1)}(u,x) - \left(\tilde A^{(k)}(u,v) - \tilde A^{(k)}_x(u,v) \right)\right) \\
& = \left[A\cdot \tilde A^{(k+1)}\right](u,v) - d_v\cdot \tilde A^{(k)}(u,v) + \tilde A^{(k)}(u,v).
\end{split}\]
It follows that \[ \tilde A^{(k+2)} = A\tilde A^{(k+1)} - (D-I)\tilde A^{(k)} \] as claimed. 
\end{proof}

For convenience we will define $\tilde A^{(0)} = I$. Define the generating function \[ F(x) = \sum_{k=0}^\infty \tilde A^{(k)}x^k, \] then from Lemma \ref{lem:recurrence} we can determine the generating function
\begin{equation}\label{eq:gen_func}
F(x) = (1-x^2)\left( I - xA+x^2(D-I)\right)^{-1}.
\end{equation}

\section{Transience for $d\ge3$}\label{sec:d>3}

\subsection{Regular Graphs}

In this section, we will assume $G$ is a regular graph, with constant degree that we will call $r$.  Note that for the infinite grid $\Z^d$, we have $r=2d$.  We will start with some general results on non-backtracking random walks for any regular graphs, and then we will apply these to grids in view of proving Theorem \ref{thm:nbp}. 

\begin{lemma}\label{lem:A^n}
For $G$ an $r$-regular graph, then using notation from the previous section, we can express the number of non-backtracking random walks on $G$ starting from $u$ and ending at $v$ of length $n$ as
\[
\tilde A^{(n)}(u,v) =  \sum_{i=0}^{\lfloor n/2\rfloor}(-1)^i\binom{n-i}{i}(r-1)^iA^{n-2i}(u,v) - \sum_{i=0}^{\lfloor n/2-1\rfloor}(-1)^i\binom{n-i-2}{i}(r-1)^iA^{n-2i-2}(u,v).
\]
\end{lemma}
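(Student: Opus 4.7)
The plan is to prove the formula by expanding the generating function (\ref{eq:gen_func}) as a formal power series. Since $G$ is $r$-regular, $D = rI$, so (\ref{eq:gen_func}) specializes to
\[
F(x) = (1-x^2)\bigl(I - xA + (r-1)x^2 I\bigr)^{-1},
\]
and the matrix inside the inverse is a polynomial in $A$, so everything commutes.

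First I would write the inverse as a formal geometric series
\[
\bigl(I - (xA - (r-1)x^2 I)\bigr)^{-1} = \sum_{k=0}^\infty \bigl(xA - (r-1)x^2 I\bigr)^k.
\]
Next, since $A$ and $I$ commute, apply the binomial theorem inside each summand:
\[
\bigl(xA - (r-1)x^2 I\bigr)^k = \sum_{j=0}^k \binom{k}{j}(-1)^j(r-1)^j x^{k+j} A^{k-j}.
\]
Then I would collect the coefficient of $x^m$ by substituting $i = j$ and $k = m - i$; the constraint $0 \le j \le k$ becomes $0 \le i \le \lfloor m/2\rfloor$, giving
\[
\bigl(I - xA + (r-1)x^2 I\bigr)^{-1} = \sum_{m=0}^\infty B_m\, x^m, \qquad B_m := \sum_{i=0}^{\lfloor m/2\rfloor}(-1)^i \binom{m-i}{i}(r-1)^i A^{m-2i}.
\]

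Finally, multiplying by $(1 - x^2)$ telescopes the series: the coefficient of $x^n$ in $F(x)$ is $B_n - B_{n-2}$, with the convention that $B_{m} = 0$ for $m < 0$. Because $F(x) = \sum_{n} \tilde A^{(n)} x^n$ by definition, reading off entries gives exactly the claimed identity (noting $\lfloor (n-2)/2\rfloor = \lfloor n/2 - 1\rfloor$ for integer $n$, which matches the upper limit of the second sum).

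The only nontrivial step is the re-indexing in the third paragraph; everything else is formal manipulation. A fully parallel alternative would be to verify the formula by induction on $n$ using the three-term recurrence $\tilde A^{(k+2)} = A\tilde A^{(k+1)} - (r-1)\tilde A^{(k)}$ (the $r$-regular specialization of Lemma \ref{lem:recurrence}) together with the base cases $\tilde A^{(0)} = I$, $\tilde A^{(1)} = A$, and a Pascal-type identity on the binomial coefficients; but the generating-function route above is cleaner and avoids case analysis on the parity of $n$.
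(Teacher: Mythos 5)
Your proof is correct and follows essentially the same route as the paper: expand the generating function (\ref{eq:gen_func}) as a geometric series, use $D-I=(r-1)I$ to apply the binomial theorem, and extract the coefficient of $x^n$. Your write-up is in fact slightly more explicit than the paper's about the re-indexing and the telescoping effect of the $(1-x^2)$ factor, which is a welcome addition rather than a deviation.
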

\begin{proof}
From (\ref{eq:gen_func}), we can expand the expression as a geometric sum to obtain
\[ F(x) = (1-x^2)\sum_{k=0}^\infty\left(A - (D-I)x\right)^kx^k.\]

Now, we are assuming $G$ is $r$-regular, so $D-I = (r-1)I$, and the above can be further expanded, yielding
\[ F(x) = (1-x^2)\sum_{k=0}^\infty\sum_{i=0}^k (-1)^i\binom{k}{i}(r-1)^iA^{k-i}x^{k+i}.\]
Recalling that $F(x) = \sum \tilde A^{(n)}x^n$,
a general formula for $\tilde A^{(n)}$ can be obtained by extracting the $x^n$ coefficient.
\[
\tilde A^{(n)} = [x^n]F(x) = \sum_{i=0}^{\lfloor n/2\rfloor}(-1)^i\binom{n-i}{i}(r-1)^iA^{n-2i} - \sum_{i=0}^{\lfloor n/2-1\rfloor}(-1)^i\binom{n-i-2}{i}(r-1)^iA^{n-2i-2}.
\]
Taking the $u,v$ entry of the matrix gives the statement of the lemma.
\end{proof}

We remark that the expression $A^n(u,v)$ is simply the total number of walks of length $n$ from $u$ to $v$, so we have expressed the number of non-backtracking walks in terms of the total number of simple walks.

For $r$-regular graphs, it will be useful to work directly with the transition probability matrix.  Denote $P = D^{-1}A$, which is the transition probability matrix for a simple random walk on $G$, so that $P^k(u,v)$ is the probability that a simple random walk starting a vertex $u$ ends at vertex $v$ after $k$ steps.  Similarly, we will let $\tilde P^{(k)}$ denote the transition probability matrix for a non-backtracking random walk on $G$.  That is, $\tilde P^{(k)}(u,v)$ is the probability that a non-backtracking random walk starting at $u$ will be at $v$ after $k$ steps.  Observe that, because of the non-backtracking condition, it is not that case that $\tilde P^{(k)}$ is simply the $k$th power of $\tilde P^{(1)}$.  Indeed, a non-backtracking random walk, viewed on the vertices, is not a Markov chain since at each step, we must remember the step taken previously.  We will let $\tilde P^{(0)} = I$, and observe that $\tilde P^{(1)} = P$ since the non-backtracking restriction does not apply to the first step. Then it is straightforward to adapt Lemma \ref{lem:recurrence} to obtain a recurrence relation for $\tilde P^{(k)}$, namely 
\[\begin{cases}
\tilde P^{(0)} = I\\ \tilde P^{(1)} = P\\ \tilde P^{(k+2)} = \frac{r}{r-1}P\tilde P^{(k+1)} - \frac{1}{r-1}\tilde P^{(k)} & k\ge2.
\end{cases}\]

As a side note, we remark that, although a non-backtracking random walk on the vertex set of a graph is not a Markov chain (as noted above), it is possible to turn a non-backtracking random walk into a Markov chain by changing the state space to the set of directed edges of the graph (one directed edge in each direction for each edge of the graph), thus viewing the walk as moving along these directed edges.  The current paper will not need this technique, as the Markov property will not be important in what we do.  We refer the reader to \cite{ihara} for a thorough exposition of this point of view.

In order to understand the matrix $\tilde P^{(k)}$ via the above recurrence, we will study the polynomials that are defined by the same recurrence.  That is, let us define the polynomials $p_{r,k}(x) = p_k(x)$ according to the recurrence relation
\[\begin{cases}
p_0(x) = 1\\ p_1(x) = x\\ p_{k+2}(x) = \frac{r}{r-1}xp_{k+1}(x) - \frac{1}{r-1}p_{k}(x) & k\ge2
\end{cases}\]
and observe that $\tilde P^{(k)} = p_k(P)$, the polynomial evaluated at the matrix $P$.  It follows therefore that if $\lambda$ is an eigenvalue of $P$, then $p_k(\lambda)$ is an eigenvalue of $\tilde P^{(k)}$.

As a side note, we observe that for $r=2$, the $p_k$'s become the well-known Chebyshev polynomials of the first kind.  Thus we may view these polynomials as a sort of generalized Chebyshev polynomial.

\begin{lemma}\label{lem:eig_asympt}
Let $\lambda$ be an eigenvalue of $P$.  Then $p_k(\lambda)$ is an eigenvalue of $\tilde P^{(k)}$ and satisfies
\begin{align*}
|p_k(\lambda)| &~~~\le~~~|\lambda|^k &\text{ for }~~~ |\lambda|>\frac{2\sqrt{r-1}}{r}\\
|p_k(\lambda)| &~~~\le~~~C_rk\left(\frac{1}{\sqrt{r-1}}\right)^k &\text{ for }~~~ |\lambda| \le \frac{2\sqrt{r-1}}{r}
\end{align*}
for $r\ge2$, and for all $k$, where $C_r$ is a constant depending only on $r$ (not $\lambda$ or $k$).
\end{lemma}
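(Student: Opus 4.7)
The plan is to analyze the polynomials $p_k$ via the characteristic equation of the two-term recurrence. The associated characteristic polynomial $t^2 - \tfrac{r}{r-1}\lambda\, t + \tfrac{1}{r-1} = 0$ has roots $t_\pm(\lambda) = \frac{r\lambda \pm \sqrt{r^2\lambda^2 - 4(r-1)}}{2(r-1)}$ with product $t_+ t_- = \tfrac{1}{r-1}$, and its discriminant changes sign precisely at $|\lambda| = \tfrac{2\sqrt{r-1}}{r}$, which accounts for the dichotomy in the statement. Consequently $p_k(\lambda) = \alpha\, t_+^k + \beta\, t_-^k$ with $\alpha,\beta$ determined by $p_0=1$ and $p_1=\lambda$. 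A direct check of the recurrence gives the parity $p_k(-\lambda) = (-1)^k p_k(\lambda)$, so in both regimes it suffices to treat $\lambda \ge 0$; moreover $\lambda \in [-1,1]$, since $P = A/r$ is symmetric and stochastic on an $r$-regular graph.

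In the first regime $\tfrac{2\sqrt{r-1}}{r} < \lambda \le 1$, both roots are real and positive, and a short algebraic check (reducing after squaring to $\lambda^2 \le 1$) gives $0 \le t_- \le t_+ \le \lambda$. Here $\alpha \ge 0$ but $\beta \le 0$, so a direct triangle inequality overshoots $\lambda^k$ and cannot be used. Instead, using $\alpha(t_+ - t_-) = \lambda - t_-$ together with the geometric factorization of $t_+^k - t_-^k$, I rewrite
\[
p_k(\lambda) \;=\; t_-^k + (\lambda - t_-)\sum_{i=0}^{k-1} t_+^{k-1-i}\, t_-^{\,i}.
\]
Comparing this termwise with the identical decomposition $\lambda^k = t_-^k + (\lambda - t_-)\sum_{i=0}^{k-1} \lambda^{k-1-i} t_-^{\,i}$ and invoking $t_+ \le \lambda$, $t_- \ge 0$ yields $0 \le p_k(\lambda) \le \lambda^k$, which is the first bound.

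In the second regime $|\lambda| \le \tfrac{2\sqrt{r-1}}{r}$, the roots are complex conjugates of modulus $(r-1)^{-1/2}$; write $t_\pm = (r-1)^{-1/2}\, e^{\pm i\theta}$ with $\cos\theta = \tfrac{r\lambda}{2\sqrt{r-1}}$. Now $\beta = \bar\alpha$, and combining $p_k(\lambda) = 2(r-1)^{-k/2}\,\mathrm{Re}(\alpha e^{ik\theta})$ with the explicit value of $\alpha$ gives the sinusoidal closed form
\[
p_k(\lambda) \;=\; \frac{(r-1)\sin((k+1)\theta) - \sin((k-1)\theta)}{r\,(r-1)^{k/2}\sin\theta},
\]
read as its polynomial limit when $\sin\theta = 0$ (i.e., on the boundary with the first regime). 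The classical Chebyshev-type bound $|\sin(m\theta)| \le m|\sin\theta|$ cancels the $\sin\theta$ in the denominator and yields
\[
|p_k(\lambda)| \;\le\; \frac{rk + r - 2}{r(r-1)^{k/2}} \;\le\; \frac{2k}{(r-1)^{k/2}} \qquad (k\ge 1),
\]
which is the claimed bound with $C_r = 2$ (the case $k=0$ is immediate).

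The principal obstacle is in the first regime: since $\beta$ is generically negative and $\alpha$ can be substantially larger than $1$, cancellation between the two exponential terms must be exploited, and the telescoping rewriting above is the key trick. The second regime is essentially a direct computation, whose only subtlety is the removable singularity at $\sin\theta = 0$ on the boundary between the two regimes, which is handled by continuity.
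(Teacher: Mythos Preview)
Your proof is correct and follows the same overall strategy as the paper: solve the two-term recurrence via its characteristic equation and split into cases according to the sign of the discriminant $r^2\lambda^2-4(r-1)$. The differences lie in the execution of each case.

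In the first regime, the paper simply records that both characteristic roots satisfy $|t_\pm|\le|\lambda|$ and asserts that the explicit formula $p_k=\alpha t_+^k+\beta t_-^k$ then gives $|p_k(\lambda)|\le|\lambda|^k$. You correctly observe that this step is not immediate: the coefficients $\alpha=\tfrac12\bigl(1+\tfrac{(r-2)\lambda}{\sqrt\Delta}\bigr)$ and $\beta=\tfrac12\bigl(1-\tfrac{(r-2)\lambda}{\sqrt\Delta}\bigr)$ have opposite signs and blow up as $\lambda\downarrow\tfrac{2\sqrt{r-1}}{r}$, so a naive triangle inequality only yields $(|\alpha|+|\beta|)|\lambda|^k$, which is not uniformly bounded by $|\lambda|^k$. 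Your telescoping identity
\[
p_k(\lambda)=t_-^k+(\lambda-t_-)\sum_{i=0}^{k-1}t_+^{k-1-i}t_-^{\,i},
\]
compared termwise with the analogous expansion of $\lambda^k$, exploits the cancellation and genuinely closes this gap; this is a real addition to the paper's argument.

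In the second regime, both arguments reach the same trigonometric closed form (your expression $\tfrac{(r-1)\sin((k+1)\theta)-\sin((k-1)\theta)}{r(r-1)^{k/2}\sin\theta}$ is an algebraic rewriting of the paper's $\bigl(\cos k\theta+\tfrac{(r-2)\lambda}{\sqrt{-\Delta}}\sin k\theta\bigr)(r-1)^{-k/2}$). The paper handles the potential blow-up of $1/\sqrt{-\Delta}$ near the boundary via the asymptotic $\sin(k\theta)\sim k\sin\theta$ as $\theta\to0$, which strictly speaking only controls the limit; your use of the uniform inequality $|\sin(m\theta)|\le m|\sin\theta|$ is cleaner, yields an explicit constant $C_r=2$, and covers the interior and the boundary in one stroke.
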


\begin{proof}
Of course, we can obtain an explicit expression for $p_k$ as a polynomial in the same way that we found the formula in Lemma \ref{lem:A^n}, however, this will not be as illuminating in terms of the asymptotics of the probabilities.  Instead, we will apply techniques from the theory of difference equations to obtain an alternative explicit expression for $p_k$.  Note that the recurrence for $p_k(x)$ has characteristic equation 
\[
p^2 - \frac{r}{r-1}xp - \frac{1}{r-1}
\]
whose roots are
\[
\frac{\frac{r}{r-1}x\pm\sqrt{\frac{r^2}{(r-1)^2}x-4}}{2} = \frac{rx\pm\sqrt{r^2x^2-4(r-1)}}{2(r-1)}.
\]

To make this easier to write down, let us denote $\Delta = r^2x^2 - 4(r-1)$ for the discriminant of the characteristic equation for the recurrence, and let $\theta$ denote an angle satisfying $\cos\theta = rx/2\sqrt{r-1}$.  Then standard techniques for solving difference equations lead us to
\begin{equation}\label{eq:p(x)}
\begin{array}{rlr}
p_k(x) &= \frac{1}{2}\left[\left(1+x\frac{r-2}{\sqrt{\Delta}}\right)\left(\frac{rx+\sqrt{\Delta}}{2(r-1)}\right)^k + \left(1-x\frac{r-2}{\sqrt{\Delta}}\right)\left(\frac{rx-\sqrt{\Delta}}{2(r-1)}\right)^k\right] & \text{for } |x| > \frac{2\sqrt{r-1}}{r}\\
p_k(x) &= \left(\frac{1}{\sqrt{r-1}}\right)^k\left(\cos(\theta k) + \frac{(r-2)x}{\sqrt{-\Delta}}\sin(\theta k)\right) & \text{for } |x| < \frac{2\sqrt{r-1}}{r}\\
p_k(x) &= \left(\frac{1}{\sqrt{r-1}}\right)^k\left(1+\frac{r-2}{r}k\right) & \text{for } |x| = \frac{2\sqrt{r-1}}{r}.
\end{array}
\end{equation}

Each of these follows from a straightforward induction argument.  

Let $\lambda$ be an eigenvalue of $P$ and observe that since $P$ is a transition probability matrix for a random walk, then we have $-1\leq\lambda\leq1$.  Direct computation show us that
\[
\left|\frac{r\lambda\pm\sqrt{r^2\lambda^2-4(r-1)}}{2(r-1)}\right| \leq |\lambda| \, \text{ for } \frac{2\sqrt{r-1}}{r} < |\lambda| \leq1.
\]
Therefore, (\ref{eq:p(x)}) gives the lemma for this case.

For the other case, (\ref{eq:p(x)}) immediately gives the lemma for $|\lambda| = 2\sqrt{r-1}/r$.  For $|\lambda| < 2\sqrt{r-1}/r$, looking at (\ref{eq:p(x)}), we observe that the $\cos(\theta k)$ and $\sin(\theta k)$ terms of course remain bounded, so only problem that could arise would be the fraction becoming unbounded as $\lambda$ approaches $2\sqrt{r-1}/r$, (since the denominator, $\sqrt{-\Delta}$, goes to 0).  However, observe that since $\cos\theta = rx/2\sqrt{{r-1}}$, computation gives $\sin\theta = \sqrt{-\Delta}/2\sqrt{r-1}$.  Basic calculus gives us that as $\theta$ goes to 0, $\sin(\theta k)$ becomes asymptotic to $k\sin\theta$.  Therefore we obtain the result.

\end{proof}

\subsection{Grids}
In this section, we will apply the tools that we have developed to grids.  Fix a dimension $d\geq3$ and observe that $\Z^d$ is an infinite $r$-regular graph with $r = 2d$.  Let $p(k)$ denote that probability that random walk on $\Z^d$ returns to its starting point after $k$ steps, and $\tilde p(k)$ the probability that a non-backtracking random walk on $\Z^d$ returns to its starting point after $k$ steps.  Note that on a grid, it is clear that if $k$ is odd, then we have $p(k) = \tilde p(k) =0$ since any walk that returns to its starting point must have an equal number of steps going away from the starting point as towards it in any of the coordinate directions.  Thus we need only concern ourselves with even length walks.  That is, we are only concerned with $p(2k)$ and $\tilde p(2k)$.

\begin{proposition}\label{prop:series}
With the notation defined above, we have
\[
\sum_{k=0}^\infty \tilde p(2k)
\]
is convergent.
\end{proposition}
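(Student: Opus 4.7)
The plan is to use the identity $\tilde P^{(2k)} = p_{2k}(P)$ (established just before Lemma \ref{lem:eig_asympt}) together with the pointwise bounds on $|p_{2k}(\lambda)|$ from Lemma \ref{lem:eig_asympt} in order to compare $\tilde p(2k)$ with the simple-random-walk return probability $p(2k)$, whose series is already known to converge for $d\ge 3$ by P\'olya's theorem.

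The first step is to set up a spectral representation. Under the Fourier transform on $\Z^d$, the operator $P$ has symbol $\lambda(\theta) = \frac{1}{d}\sum_{j=1}^d \cos\theta_j$, so for any polynomial $q$,
\[
[q(P)](0,0) \;=\; \frac{1}{(2\pi)^d}\int_{[-\pi,\pi]^d} q(\lambda(\theta))\,d\theta.
\]
Applying this with $q=p_{2k}$ and with $q(x)=x^{2k}$ gives simultaneous integral expressions for $\tilde p(2k)$ and $p(2k)$. Next, with $r=2d$, I would split the integration domain into the two regions that appear in Lemma \ref{lem:eig_asympt}, namely $\Omega_1=\{\theta:|\lambda(\theta)|>2\sqrt{r-1}/r\}$ and $\Omega_2=\{\theta:|\lambda(\theta)|\le 2\sqrt{r-1}/r\}$, and apply the two bounds pointwise in $\theta$.

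On $\Omega_1$, Lemma \ref{lem:eig_asympt} gives $|p_{2k}(\lambda(\theta))|\le\lambda(\theta)^{2k}$; since $\lambda(\theta)^{2k}\ge 0$, extending the integral back to the whole torus only increases its value, so the $\Omega_1$ contribution to $\tilde p(2k)$ is at most $p(2k)$. On $\Omega_2$, the lemma provides the uniform bound $|p_{2k}(\lambda(\theta))|\le 2C_r k/(r-1)^k$, independent of $\theta$, so the $\Omega_2$ contribution is at most $2C_r k/(r-1)^k$. Combining,
\[
\tilde p(2k) \;\le\; p(2k) \;+\; \frac{2C_r\,k}{(r-1)^k}.
\]
Summing in $k$, the first series converges by P\'olya's theorem for $d\ge 3$ (Section \ref{sub:polya}), and the second is a standard convergent series since $r-1=2d-1\ge 5$ yields geometric decay that dominates the linear factor $k$.

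The only subtle point I expect is the justification of the spectral/Fourier representation on the infinite lattice, which strictly speaking requires interpreting $p_{2k}(P)$ as an operator on $\ell^2(\Z^d)$ and using that translation-invariant operators are Fourier multipliers; this is classical and purely bookkeeping, not a new idea. Everything else is immediate once Lemma \ref{lem:eig_asympt} and the classical P\'olya estimate $p(2k)\sim c(\pi n)^{-d/2}$ are in hand.
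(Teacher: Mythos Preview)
Your argument is correct and follows the same core idea as the paper: express $\tilde p(2k)=[p_{2k}(P)](0,0)$ spectrally, split the spectrum at the threshold $2\sqrt{r-1}/r$, and apply Lemma~\ref{lem:eig_asympt} to bound $\tilde p(2k)$ by $p(2k)$ plus an exponentially decaying term.

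The one genuine difference is in how the spectral representation is obtained. The paper avoids working on the infinite lattice by passing, for each $k$, to a finite torus $T_{n_k}^{(d)}$ with $n_k>2k$, where the probabilities coincide with those on $\Z^d$; it then uses ordinary eigenvalues and the trace formula $\tilde p(2k)=\tfrac{1}{N_k}\sum_i p_{2k}(\lambda_i)$. You instead invoke the Fourier transform on $\Z^d$ directly, writing $[q(P)](0,0)=(2\pi)^{-d}\int q(\lambda(\theta))\,d\theta$ with $\lambda(\theta)=\tfrac{1}{d}\sum_j\cos\theta_j$. Your route is cleaner for the lattice specifically---no sequence of auxiliary graphs is needed, and the ``bookkeeping'' you flag is indeed standard. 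The paper's route, on the other hand, never needs the Fourier machinery and, as the authors note, immediately generalizes to any infinite vertex-transitive graph admitting finite vertex-transitive quotients. Both lead to the same inequality $\tilde p(2k)\le p(2k)+2C_r k/(r-1)^k$ (up to harmless constants), and the conclusion follows identically.
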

\begin{proof}
From Lemma \ref{lem:conv}, since the random walk on $\Z^d$ is transient, we have that 
\[
\sum_{k=0}^\infty p(2k) < \infty.
\]
We will proceed by comparing $\tilde p(2k)$ to $p(2k)$ by way of the tools from the last section.

In order to take advantage of eigenvalues, rather than look at the walks on the infinite grids, we will look at a finite quotient of the grid, namely the $d$-dimensional torus.  We will let $T_n^{(d)}$ denote the $d$-dimensional torus of length $n$. More specifically, let $C_n$ denote the cycle on $n$ vertices.  Then $T_n^{(d)}$ is formed by taking the cartesian product $C_n\Box C_n$ of the cycle with itself $d$ times.  Locally, $T_n^{(d)}$ is indistinguishable from $\Z^d$.  In particular, $T_n^{(d)}$ is still a $2d$-regular graph.  Define $p_{T_n^{(d)}}(2k)$ to be the probability that a random walk on $T_n^{(d)}$ returns to its starting point after $2k$ steps, and likewise, $\tilde p_{T_n^{(d)}}(2k)$ the probability that a  non-backtracking random walk on $T_n^{(d)}$ returns to its starting point after $2k$ steps.  Then note that for $n > 2k$, it is clear that $p_{T_n^{(d)}}(2k) = p(2k)$ and that $\tilde p_{T_n^{(d)}}(2k) = \tilde p(2k)$. From here on the the proof, we will simply denote each of these by $p(2k)$ and $\tilde p(2k)$ respectively. More precisely, given a $k$, we will choose an $n_k > 2k$, and look at the walks on $T_{n_k}^{(d)}$.

Since $n_k> 2k$, then by symmetry, the probability of a random walk (or non-backtracking random walk) returning to its starting point after $2k$ steps is the same for any starting vertex in $T_n^{(d)}$.  Thus, the diagonal entries of $P^{2k}$ and $\tilde P^{(2k)}$ are all identical, and equal to $p(2k)$ and $\tilde p(2k)$ respectively.  Let $N_k$ denote the total number of vertices of $T_{n_k}^{(d)}$, and then we can express
\[
p(2k) = \frac{1}{N_k}\Tr(P^{2k}) = \frac{1}{N_k}\sum_{i=0}^{N_k} \lambda_i^{2k}
\]
where $\lambda_1,...,\lambda_{N_k}$ denote the eigenvalues of $P$, the transition probability matrix for $T_{n_k}^{(d)}$.  Since the simple random walk is transient, from Lemma \ref{lem:conv} we obtain
\begin{equation}\label{eq:eig_conv}
\sum_{k=0}^\infty\left(\frac{1}{N_k}\sum_{i=0}^{N_k}\lambda_i^{2k}\right) < \infty.
\end{equation}

Now, recall that the eigenvalues of $\tilde P^{(2k)}$ are $p_{2k}(\lambda_1),...,p_{2k}(\lambda_{N_k})$, where $p_{2k}(x)$ is the polynomial defined in the previous section.  In a similar manner to the above, we have
\[
\tilde p(2k) = \frac{1}{N_k}\Tr(\tilde P^{(2k)}) = \frac{1}{N_k}\sum_{i=0}^{N_k}p_{2k}(\lambda_i).
\]

In view of Lemma \ref{lem:eig_asympt}, we will consider eigenvalues above and below the threshold $2\sqrt{r-1}/r$.  Let us suppose that the eigenvalues are ordered so that $\lambda_0,...,\lambda_m$ have absolute value below $2\sqrt{r-1}/r$ and $\lambda_{m+1},...,\lambda_{N_k}$ those above $2\sqrt{r-1}/r$.  Then by Lemma \ref{lem:eig_asympt},recalling that our graph is $2d$-regular, we certainly have
\[
\tilde p(2k) = \frac{1}{N_k}\sum_{i=0}^m p_{2k}(\lambda_i) + \frac{1}{N_k}\sum_{i=m+1}^{N_k}p_{2k}(\lambda_i) \le C_{2d}k\left(\frac{1}{\sqrt{2d-1}}\right)^k + \frac{1}{N_k}\sum_{i=0}^{N_k}\lambda_i^{2k}.
\]
Therefore 
\[
\sum_{k=0}^\infty \tilde p(2k) \le C_{2d}\sum_{k=0}^\infty k\left(\frac{1}{\sqrt{2d-1}}\right)^k + \sum_{k=0}^\infty\left(\frac{1}{N_k}\sum_{i=0}^{N_k}\lambda_i^{2k}\right).
\]
The first term on the right is clearly a convergent series, and the second term is convergent by (\ref{eq:eig_conv}).  This gives the lemma.
\end{proof}

Applying Lemma \ref{lem:conv}, this proposition immediately gives us the following.
\begin{corollary}\label{cor:ge3}
A non-backtracking random walk on $\Z^d$ with $d\geq3$ is transient.
\end{corollary}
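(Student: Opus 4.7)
The plan is to deduce the corollary as an immediate consequence of Proposition~\ref{prop:series} together with Lemma~\ref{lem:conv}. Lemma~\ref{lem:conv} gives a general criterion---convergence of the series of return probabilities implies transience---whose justification (via expected number of returns / Borel--Cantelli) does not use the Markov property. Thus it applies verbatim to a non-backtracking walk, with $\tilde p$ in place of $p$, even though a non-backtracking walk viewed on the vertex set is not itself a Markov chain.

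First I would record the parity observation already noted just before Proposition~\ref{prop:series}: any closed walk on $\Z^d$ must take an equal number of $+e_i$ and $-e_i$ steps in each coordinate direction, so its length is even. Hence $\tilde p(2k+1) = 0$ for every $k \ge 0$, and therefore
\[
\sum_{k=0}^\infty \tilde p(k) \;=\; \sum_{k=0}^\infty \tilde p(2k).
\]
Proposition~\ref{prop:series} asserts that the right-hand side is finite, so the full series $\sum_k \tilde p(k)$ converges. Invoking the non-backtracking version of Lemma~\ref{lem:conv} then yields that the non-backtracking walk on $\Z^d$ is transient for $d \ge 3$, which is exactly the statement of the corollary.

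There is essentially no obstacle to surmount here: all of the analytic content---the passage to the torus $T_{n_k}^{(d)}$, the trace formula $\frac{1}{N_k}\Tr(\tilde P^{(2k)}) = \frac{1}{N_k}\sum_i p_{2k}(\lambda_i)$, the eigenvalue bounds from Lemma~\ref{lem:eig_asympt}, and the resulting split into a geometrically decaying tail plus the classical $p(2k)$ series---has already been absorbed into Proposition~\ref{prop:series}. The only item deserving explicit mention in the write-up is that Lemma~\ref{lem:conv}, stated in Section~\ref{sub:polya} for simple random walks, transfers without change to the non-backtracking setting, since the argument depends only on interpreting $\sum_k \tilde p(k)$ as the expected number of visits to the starting vertex.
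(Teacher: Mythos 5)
Your proposal is correct and matches the paper's argument exactly: the corollary is obtained by combining Proposition~\ref{prop:series} with Lemma~\ref{lem:conv}, using the parity observation that $\tilde p(2k+1)=0$. Your added remark that the convergence criterion of Lemma~\ref{lem:conv} transfers to the non-Markovian non-backtracking setting (since it only concerns the expected number of returns) is a worthwhile point that the paper leaves implicit.
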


Thus, to finish the proof of Theorem \ref{thm:nbp}, we need only handle the case of $d=2$, which we will do in the next section.

We remark that this proof of Corollary \ref{cor:ge3} does not use the specific structure of the grid $\Z^d$.  Indeed, all we needed was Lemma \ref{lem:eig_asympt}, which applies to non-backtracking random walks on any regular graph, and the prior knowledge that a simple random walk on $\Z^d$ for $d\ge3$ is transient, and then the ability to examine the trace of a finite quotient of $\Z^d$, for which the probability would be the same for each vertex, allowing us to make the appropriate comparison.  Thus, we have actually proven the more general statement as follows.

\begin{proposition}
Let $G$ be any infinite vertex transitive graph with finite vertex transitive quotients of any given size.  Then if a simple random walk on $G$ is transient, then a non-backtracking random walk on $G$ is also transient.  
\end{proposition}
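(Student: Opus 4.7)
The plan is to observe that the proof of Proposition \ref{prop:series} and Corollary \ref{cor:ge3} essentially never used the grid structure explicitly, only three features: that $\Z^d$ is regular, that it admits arbitrarily large finite vertex-transitive quotients that locally agree with it, and transience of the simple random walk. So I would recycle that proof almost verbatim, replacing $\Z^d$ with the hypothesized $G$ and $T_{n_k}^{(d)}$ with an appropriate sequence of finite vertex-transitive quotients $G_{n_k}$.

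First I would note that vertex-transitivity of $G$ forces $G$ to be $r$-regular for some $r\ge 2$ (the case $r=1$ is trivial and the case where $G$ is a two-sided infinite path is handled separately, as the simple random walk there is recurrent and the hypothesis vacuous). Then, for each $k$, I would select a finite vertex-transitive quotient $G_{n_k}$ whose injectivity radius exceeds $2k$, so that every closed walk of length at most $2k$ based at a vertex of $G_{n_k}$ lifts bijectively to a closed walk in $G$; this gives $p_{G_{n_k}}(2k) = p_G(2k)$ and $\tilde p_{G_{n_k}}(2k) = \tilde p_G(2k)$, using the same notation as before. Such quotients exist by the hypothesis that vertex-transitive quotients of every size are available, combined with the standard observation that as the quotient size grows one can extract a subsequence whose girth (and more generally injectivity radius) tends to infinity; this is the one potentially delicate point and would require a brief justification.

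Next, exactly as in the proof of Proposition \ref{prop:series}, vertex-transitivity of $G_{n_k}$ makes the diagonal entries of $P^{2k}$ and $\tilde P^{(2k)}$ constant, so
\[
p(2k) = \frac{1}{N_k}\Tr(P^{2k}) = \frac{1}{N_k}\sum_{i=1}^{N_k}\lambda_i^{2k},\qquad \tilde p(2k) = \frac{1}{N_k}\sum_{i=1}^{N_k}p_{2k}(\lambda_i),
\]
where $N_k = |V(G_{n_k})|$ and $\lambda_1,\dots,\lambda_{N_k}$ are the eigenvalues of the transition matrix of $G_{n_k}$. Transience of the simple random walk on $G$, combined with Lemma \ref{lem:conv}, yields $\sum_k \frac{1}{N_k}\sum_i \lambda_i^{2k} < \infty$.

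Finally, I would split each eigenvalue sum at the threshold $2\sqrt{r-1}/r$ and apply Lemma \ref{lem:eig_asympt} to obtain
\[
\tilde p(2k) \;\le\; C_r\, k\left(\frac{1}{\sqrt{r-1}}\right)^{2k} + \frac{1}{N_k}\sum_{i=1}^{N_k}\lambda_i^{2k},
\]
where both resulting series converge (the first by direct comparison with a geometric series, the second by transience of the simple walk). Hence $\sum_k \tilde p(2k) < \infty$, and Lemma \ref{lem:conv} delivers transience of the non-backtracking random walk on $G$. The main obstacle, as noted, is producing vertex-transitive quotients of arbitrarily large injectivity radius; everything else is a direct transcription of the earlier argument with $r$ in place of $2d$.
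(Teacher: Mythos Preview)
Your proposal is correct and follows essentially the same approach as the paper. The paper does not give a separate proof of this proposition; it simply observes that the argument for Proposition~\ref{prop:series} and Corollary~\ref{cor:ge3} used only regularity, the existence of suitable finite vertex-transitive quotients, and transience of the simple walk, and your write-up is exactly a careful transcription of that observation with $r$ replacing $2d$ and $G_{n_k}$ replacing $T_{n_k}^{(d)}$. You are also right to flag the injectivity-radius issue: the paper's hypothesis ``finite vertex transitive quotients of any given size'' is being used implicitly to mean quotients that locally agree with $G$ out to any prescribed radius, and this is indeed the one point that deserves a word of justification (as you note, large size alone does not force large injectivity radius, e.g.\ $C_n \Box C_4$ as quotients of $\Z^2$).
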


We remark further that it seems intuitively clear that 
\[
\tilde p(k) \leq p(k)
\]
for all $k$, in any infinite regular graph.  Numerical computation gives considerable evidence for this, and we strongly suspect that this is true.  However, the above is sufficient for our purposes.

\section{Recurrence on $\Z^2$}\label{sec:Z2}

We remark that our proof for transience of a non-backtracking random walk on $\Z^d$ when $d\ge3$ relied on the fact that the simple random walk is transient.  In the $d=2$ case, the simple random walk is recurrent, so that the series from Lemma \ref{lem:conv} diverges, and thus the comparison from Lemma \ref{lem:eig_asympt} is not informative.  Therefore we take a different approach that will more precisely nail down the asymptotic growth rate of $\tilde p(2k)$ in this case.

In section \ref{sub:polya}, in our sketch of the proof of P\'olya's Theorem, we enumerated the total number of closed simple walks on $\Z^2$ and found it to be
\[\binom{2n}{n}^2.\]
We therefore know that, if $A$ is the adjacency operator on $\Z^2$, then any diagonal entry of $A^{2n}$ is $\binom{2n}{n}^2$.  Thus if we wish to count the number of closed non-backtracking walks of length $2n$ on $\Z^2$ from a vertex to itself, then we can use Lemma \ref{lem:A^n}, setting $r=4$ since $\Z^2$ is $4$-regular, and obtain the diagonal entry of $\tilde A^{(2n)}$.

\begin{lemma}
The total number of closed non-backtracking walks of length $2n$ from a vertex to itself in $\Z^2$ is
\[
\sum_{i=0}^{n}(-3)^i\binom{2n-i}{i}\binom{2n-2i}{n-i}^2 - \sum_{i=0}^{n-1}(-3)^i\binom{2n-i-2}{i}\binom{2n-2i-2}{n-i-1}^2.
\]
Changing the indices, this can alternatively be expressed as
\[\sum_{k=0}^n (-3)^{n-k}\binom{n+k}{2k}\binom{2k}{k}^2 - \sum_{i=0}^{n-1}(-3)^{n-1-k}\binom{n+k-2}{2k-2}\binom{2(k-1)}{k-1}^2.
\]
\end{lemma}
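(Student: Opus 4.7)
The plan is straightforward: specialize Lemma \ref{lem:A^n} to the graph $\Z^2$ (which is $4$-regular, so we take $r=4$ and $r-1=3$) at a fixed diagonal entry $(u,u)$, with walk length $2n$. The only substantive input needed beyond Lemma \ref{lem:A^n} itself is the closed-walk count on $\Z^2$ already derived in Section \ref{sub:polya}, namely $A^{2m}(u,u)=\binom{2m}{m}^2$, which appears for each of the exponents $2m\in\{2n-2i,\,2n-2i-2\}$ on the right-hand side of Lemma \ref{lem:A^n}. After combining $(-1)^i(r-1)^i=(-3)^i$, the first displayed expression in the statement falls out directly from the substitution.

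To obtain the second displayed form, I would reindex each sum by setting $k=n-i$. This substitution sends $(-3)^i$ to $(-3)^{n-k}$, and by the complementation identity $\binom{a}{b}=\binom{a}{a-b}$ it sends $\binom{2n-i}{i}=\binom{n+k}{n-k}$ to $\binom{n+k}{2k}$ in the first sum, and $\binom{2n-i-2}{i}=\binom{n+k-2}{n-k}$ to $\binom{n+k-2}{2k-2}$ in the second; the interior square-binomials $\binom{2n-2i}{n-i}^2$ and $\binom{2n-2i-2}{n-i-1}^2$ become $\binom{2k}{k}^2$ and $\binom{2(k-1)}{k-1}^2$ respectively.

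There is no real conceptual obstacle here — the lemma is essentially an evaluation, and all of the genuine work was already done in establishing Lemma \ref{lem:A^n} and in the classical enumeration $A^{2m}(u,u)=\binom{2m}{m}^2$. The only bookkeeping item worth checking carefully is the range of summation after reindexing and the parity/termination of the upper indices coming from $\lfloor 2n/2\rfloor=n$ and $\lfloor 2n/2-1\rfloor=n-1$ in the two sums appearing in Lemma \ref{lem:A^n}; once these are matched up, the second displayed formula is just a change of variable applied to the first.
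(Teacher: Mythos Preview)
Your proposal is correct and matches the paper's own derivation exactly: the paper states this lemma immediately after observing that one simply plugs $r=4$ and the known diagonal value $A^{2m}(u,u)=\binom{2m}{m}^2$ into Lemma \ref{lem:A^n}, with no further argument given. Your reindexing $k=n-i$ is the right substitution for both sums (note that the paper's second displayed formula has a minor typo in the summation index and exponent, but your version is the intended one).
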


It so happens that the expression $\displaystyle \sum_{k=0}^n (-3)^{n-k}\binom{n+k}{2k}\binom{2k}{k}^2$ shows up in the study of the \emph{central trinomial coefficients}, $T_n$, which are defined to be the largest coefficient in the expansion of $(1+x+x^2)^n$.  Formally, that is
\[
T_n = [x^n](1+x+x^2)^n.
\]
From the definition, one can derive the formula
\[
T_n = \sum_{k=0}^{\lfloor n/2 \rfloor}\binom{n}{2k}\binom{2k}{k}.
\]

In a paper of Zhi-Wei Sun (\cite{sun}), it is proven that $T_n$ satisfy the following relationship with the above sum.
\begin{lemma}[Lemma 4.1 of \cite{sun}]
For any $n\in \N$ we have
\[
T_n^2 = \sum_{k=0}^n\binom{n+k}{2k}\binom{2k}{k}^2(-3)^{n-k}.
\]
\end{lemma}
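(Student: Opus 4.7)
The plan is to verify the identity by showing both sides satisfy the same linear recurrence in $n$ with polynomial coefficients, and then checking agreement on sufficiently many initial values. For the left-hand side, the generating function $\sum_n T_n z^n = (1 - 2z - 3z^2)^{-1/2}$ yields via routine differentiation the classical three-term recurrence $(n+1)T_{n+1} = (2n+1)T_n + 3n T_{n-1}$ with $T_0 = T_1 = 1$. Since $T_n$ is D-finite of order $2$, any quadratic expression in $T_n$ and $T_{n-1}$ lies in a three-dimensional solution space, so $T_n^2$ satisfies a polynomial-coefficient recurrence of order at most $3$. I would derive this explicitly by tracking the triple $(T_n^2,\, T_n T_{n-1},\, T_{n-1}^2)$, pushing each entry forward one step using the relation above, and then eliminating the cross term $T_n T_{n-1}$.

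For the right-hand side, set $F(n,k) = (-3)^{n-k}\binom{n+k}{2k}\binom{2k}{k}^2$ and $S_n = \sum_k F(n,k)$. Apply Zeilberger's creative telescoping algorithm to $F$ to produce polynomials $a_0(n),\ldots,a_d(n)$ and a rational certificate $R(n,k)$ with
\[
\sum_{j=0}^d a_j(n) F(n+j, k) = F(n, k+1) R(n, k+1) - F(n,k) R(n,k).
\]
Summing over $k \in \Z$, the right-hand side telescopes to $0$ (since $F(n,k)$ has finite support in $k$), producing a recurrence $\sum_j a_j(n) S_{n+j} = 0$. Then verify that this recurrence coincides, up to a common polynomial factor, with the one derived for $T_n^2$. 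Finally, check that both sides agree at the first few values of $n$: a direct computation gives $T_n^2 = S_n = 1,\,1,\,9$ at $n = 0,1,2$, which suffices once the common recurrence has order $\le 3$.

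The main obstacle is not conceptual but computational: both the elimination that produces the recurrence for $T_n^2$ and the Zeilberger telescoping on $F$ involve routine but unwieldy polynomial arithmetic, exactly the kind of bookkeeping that WZ-style algorithms automate. A more illuminating alternative is to recognize both sides as values of a squared Legendre polynomial. The substitution $z \mapsto i\sqrt{3}\, z$ in the Legendre generating function $\sum_n P_n(t) z^n = (1 - 2tz + z^2)^{-1/2}$ matches the $T_n$-generating function and gives $T_n = (i\sqrt{3})^n P_n(-i/\sqrt{3})$, so $T_n^2 = (-3)^n P_n(i/\sqrt{3})^2$. The right-hand side, rewritten as $(-3)^n \sum_k (-1/3)^k \binom{n+k}{2k}\binom{2k}{k}^2$, is then the specialization at $x^2 = -1/3$ of a Clausen-type hypergeometric expansion of $P_n(x)^2$, and the identity follows by matching parameters. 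The recurrence route is more mechanical; the Legendre route is more conceptual but requires locating the precise quadratic transformation, which is itself nontrivial.
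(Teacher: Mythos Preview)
Your proposal is sound. The paper does not prove this lemma itself; it simply quotes it from Sun \cite{sun} and remarks that the proof there proceeds by ``manipulation of identities from the theory of hypergeometric series.'' Your second route is essentially that same approach in a different coordinate system: the identification $T_n=(i\sqrt{3})^n P_n(-i/\sqrt{3})$ is correct, and the identity you need, $P_n(x)^2=\sum_{k=0}^n\binom{n+k}{2k}\binom{2k}{k}^2\bigl((x^2-1)/4\bigr)^k$, is exactly Clausen's ${}_2F_1^2={}_3F_2$ formula specialized to the Legendre parameters; evaluating at $x^2=-1/3$ gives $(x^2-1)/4=-1/3$ and the lemma drops out. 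So that branch of your proposal and Sun's proof are the same argument viewed through slightly different windows.

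Your first route, the Zeilberger/creative-telescoping verification, is a genuinely different proof. It trades conceptual content for certainty: once the certificate $R(n,k)$ is produced, the identity is reduced to checking a few polynomial equalities and initial values, with no need to locate the right classical transformation. One small caveat worth making explicit: after matching the two order-$\le 3$ recurrences, you should also note that the leading coefficient $a_d(n)$ does not vanish for $n\ge 0$, so that three initial values really do determine the sequence. With that addition, either route constitutes a complete proof.
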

The proof in \cite{sun} involves the manipulation of identities from the theory of hypergeometric series.

From this we obtain an expression for the number of closed walks from a vertex to itself on $\Z^2$ in terms of the squares of the central trinomial coefficients.

\begin{corollary}\label{cor:diffsquares}
For any $n\in\N$ and any vertex $v\in\Z^2$, we have
\[
\tilde A^{(2n)}(v,v) = T_n^2 - T_{n-1}^2.
\]
\end{corollary}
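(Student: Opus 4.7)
The plan is to recognize each of the two sums in the previous lemma as an instance of Sun's identity and then subtract. Specifically, I start from the reindexed expression
\[
\tilde A^{(2n)}(v,v) \;=\; \sum_{k=0}^{n}(-3)^{n-k}\binom{n+k}{2k}\binom{2k}{k}^{2} \;-\; \sum_{k=0}^{n-1}(-3)^{(n-1)-k}\binom{(n-1)+k}{2k}\binom{2k}{k}^{2},
\]
which comes directly from the previous lemma after the substitution $k = n - i$ in the first sum (using $\binom{n+k}{n-k}=\binom{n+k}{2k}$) and the parallel substitution $k = (n-1) - i$ in the second sum.

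The first obstacle is simply to make sure the reindexing of the second sum is correct; this is a routine computation, but it is worth checking the endpoints and the identity $\binom{2n-i-2}{i}\binom{2n-2i-2}{n-i-1}^{2}=\binom{(n-1)+k}{2k}\binom{2k}{k}^{2}$ with $i=(n-1)-k$, exactly as in the first sum but with $n$ replaced by $n-1$.

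Once the expression is in that symmetric form, I apply Sun's Lemma 4.1 directly: the first sum is precisely $T_{n}^{2}$ and the second sum, being the same expression with $n$ replaced by $n-1$, is precisely $T_{n-1}^{2}$. Subtracting gives $\tilde A^{(2n)}(v,v)=T_{n}^{2}-T_{n-1}^{2}$, which is the claim.

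There is essentially no hard step here beyond the bookkeeping: the real combinatorial content lives in the previous lemma (which counts closed non-backtracking walks using Lemma \ref{lem:A^n} with $r=4$ and the known value $A^{2n}(v,v)=\binom{2n}{n}^{2}$) and in Sun's identity, both of which are available to us. The only care needed is to confirm that the lower-order sum really matches Sun's identity at index $n-1$ (including the edge case $n=0$, where the second sum is empty and the identity reads $\tilde A^{(0)}(v,v)=1=T_{0}^{2}$, consistent with $\tilde A^{(0)}=I$).
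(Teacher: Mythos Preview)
Your proof is correct and follows exactly the approach the paper intends: the corollary is stated immediately after Sun's identity with the phrase ``From this we obtain,'' and your argument simply makes that deduction explicit by recognizing each of the two sums as an instance of Sun's formula at $n$ and at $n-1$ respectively. Your reindexing of the second sum via $k=(n-1)-i$ is in fact cleaner than the paper's displayed reindexed form (which, as written, has the exponent $(-3)^{n-1-k}$ paired with $\binom{n+k-2}{2k-2}\binom{2(k-1)}{k-1}^2$ and would need a further shift $k\mapsto k+1$ to match Sun's identity); your version lines up directly.
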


The asymptotics of the numbers $T_n$ are investigated in \cite{wagner} using singularity analysis of the generating function for a generalization of the numbers $T_n$.  A special case of their main result gives the following.

\begin{lemma}[\cite{wagner}]\label{lem:trinasymp}
The asymptotics for the numbers $T_n$ are given by
\[
T_n = \frac{\sqrt3}{2\sqrt{n\pi}}3^n\left(1 - \frac{3}{16n} + O\left(\frac{1}{n^2}\right)\right).
\]
\end{lemma}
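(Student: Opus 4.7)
My plan is to prove the asymptotic via Laplace's method applied to a Fourier-type integral representation of $T_n$. From $T_n = [x^n](1+x+x^2)^n$, Cauchy's coefficient formula on the unit circle $z = e^{i\theta}$ together with the factorization $1 + e^{i\theta} + e^{2i\theta} = e^{i\theta}(1+2\cos\theta)$ causes the $e^{\pm in\theta}$ factors to cancel, producing the real integral
\[
T_n = \frac{1}{2\pi}\int_{-\pi}^{\pi}(1+2\cos\theta)^n\,d\theta.
\]
The integrand peaks at $\theta = 0$ with value $3^n$ and is exponentially small in $n$ away from that point, so the leading-order contribution and all polynomial-in-$1/n$ corrections come from a shrinking window around $\theta = 0$.

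Inside this window I would write $(1+2\cos\theta)^n = 3^n e^{n g(\theta)}$ with
\[
g(\theta) = \log\!\left(\frac{1+2\cos\theta}{3}\right) = -\frac{\theta^2}{3} - \frac{\theta^4}{36} + O(\theta^6),
\]
where the $\theta^4$ coefficient must be obtained by combining the direct Taylor expansion of $\log(1+2\cos\theta)$ with the $-u^2/2$ term of $\log(1+u)$. Rescaling $\theta = t\sqrt{3/n}$ converts the exponent into $-t^2 - t^4/(4n) + O(n^{-2})$, and the integral becomes
\[
T_n = \frac{3^n}{2\pi}\sqrt{\frac{3}{n}}\int e^{-t^2}\!\left(1 - \frac{t^4}{4n} + O\!\left(\frac{1+t^8}{n^2}\right)\right)\,dt.
\]

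Using the Gaussian moment identities $\int_{\mathbb{R}} e^{-t^2}\,dt = \sqrt{\pi}$ and $\int_{\mathbb{R}} t^4 e^{-t^2}\,dt = \tfrac{3}{4}\sqrt{\pi}$, and extending the integral to all of $\mathbb{R}$ at the cost of super-polynomially small error, this evaluates to
\[
T_n = \frac{3^n\sqrt{3}}{2\sqrt{\pi n}}\!\left(1 - \frac{3}{16n} + O(n^{-2})\right),
\]
matching the claim. The main obstacle is purely bookkeeping: getting the $-\tfrac{1}{36}\theta^4$ coefficient of $g$ correct, since any error there propagates straight into the $\tfrac{3}{16n}$ coefficient; and justifying that truncating the window of integration to, say, $|\theta| \le n^{-2/5}$ contributes only super-polynomially small error, so that the Laplace expansion is valid to any fixed order in $1/n$. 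Both points are routine once the Taylor expansion and tail estimate are done carefully; a cleaner but less elementary alternative would be to apply singularity analysis to the known generating function $\sum_{n\ge0}T_n x^n = (1-2x-3x^2)^{-1/2} = (1-3x)^{-1/2}(1+x)^{-1/2}$, whose unique dominant singularity at $x = 1/3$ has type $-1/2$ and whose full singular expansion there delivers the same result term by term.
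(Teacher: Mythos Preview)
Your proposal is correct. The integral representation $T_n=\frac{1}{2\pi}\int_{-\pi}^{\pi}(1+2\cos\theta)^n\,d\theta$ is valid, the Taylor expansion $g(\theta)=-\theta^2/3-\theta^4/36+O(\theta^6)$ is right (including the delicate $\theta^4$ coefficient), and the Laplace computation with $\int t^4e^{-t^2}\,dt=\tfrac34\sqrt\pi$ yields exactly the claimed $1-\tfrac{3}{16n}$ correction. The tail outside any window $|\theta|\le n^{-2/5}$ is indeed $O((3-cn^{-4/5})^n)=o(3^n n^{-K})$ for every $K$, so your truncation argument is sound.

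The paper itself gives no proof of this lemma: it is quoted directly from Wagner~\cite{wagner}, whose method is singularity analysis of the generating function $\sum_{n\ge0}T_nx^n=(1-2x-3x^2)^{-1/2}$ near its dominant singularity $x=1/3$. That is precisely the alternative you sketch at the end. Your primary route via the Laplace method is genuinely different and arguably more elementary, since it avoids the transfer theorems of analytic combinatorics and works directly from the coefficient-extraction integral; on the other hand, singularity analysis generalizes immediately to the broader family of generalized trinomial coefficients treated in~\cite{wagner}, which is why that paper prefers it. Either approach is entirely adequate here, and your write-up is self-contained in a way the paper's citation is not.
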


\begin{corollary}\label{cor:asymp}
Asymptotically, the number of closed non-backtracking walks from a vertex to itself on the grid $\Z^2$ is given by
\[
\tilde A^{(2n)}(v,v) \sim \frac{2}{\pi n}3^{2n-1}.
\]
\end{corollary}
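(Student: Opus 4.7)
The plan is to combine Corollary \ref{cor:diffsquares} with the asymptotic expansion from Lemma \ref{lem:trinasymp}. By Corollary \ref{cor:diffsquares}, $\tilde A^{(2n)}(v,v) = T_n^2 - T_{n-1}^2$, so I would square the asymptotic formula for $T_n$, do the same for $T_{n-1}$ (being careful to re-express everything in powers of $9^n$ and $1/n$ rather than $9^{n-1}$ and $1/(n-1)$ for uniformity), and subtract.

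From Lemma \ref{lem:trinasymp}, squaring gives
\[
T_n^2 = \frac{3}{4\pi n}\, 9^n\!\left(1 + O(1/n)\right),
\]
and for $T_{n-1}$ I would use $9^{n-1} = 9^n/9$ together with $\frac{1}{n-1} = \frac{1}{n}(1 + O(1/n))$ to obtain
\[
T_{n-1}^2 = \frac{3}{4\pi(n-1)}\, 9^{n-1}\!\left(1+O(1/n)\right) = \frac{1}{12\pi n}\, 9^n\!\left(1+O(1/n)\right).
\]
Subtracting yields
\[
T_n^2 - T_{n-1}^2 = \frac{9^n}{\pi n}\!\left(\tfrac{3}{4} - \tfrac{1}{12}\right)\!\left(1 + O(1/n)\right) = \frac{2}{3}\cdot\frac{9^n}{\pi n}\!\left(1+O(1/n)\right) = \frac{2}{\pi n}\,3^{2n-1}\!\left(1+O(1/n)\right),
\]
which is the claimed asymptotic.

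There is no serious obstacle here; the only point requiring care is to verify that the leading-order terms of $T_n^2$ and $T_{n-1}^2$ do not cancel. They do not, precisely because the factor $9^{n-1}$ in $T_{n-1}^2$ produces a different constant in front of $9^n/n$ than the factor $9^n$ in $T_n^2$ does: the respective leading coefficients are $3/4$ and $1/12$, whose difference is $2/3 \neq 0$. Once this is verified, the subleading $O(1/n)$ terms coming from the expansions in Lemma \ref{lem:trinasymp} and from $1/(n-1)$ are all absorbed into the error term and do not affect the asymptotic equivalence.
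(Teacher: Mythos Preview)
Your proof is correct and follows essentially the same route as the paper's: both use Corollary~\ref{cor:diffsquares} to write $\tilde A^{(2n)}(v,v)=T_n^2-T_{n-1}^2$, square the asymptotic from Lemma~\ref{lem:trinasymp}, and subtract. Your explicit remark that the leading terms do not cancel (because $3/4\neq 1/12$), so that the $O(1/n)$ errors remain subleading after subtraction, is a nice touch that the paper leaves implicit.
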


\begin{proof}
Using Corollary \ref{cor:diffsquares} and Lemma \ref{lem:trinasymp}, we have
\[\begin{split}
\tilde A^{(2n)}(v,v) &= T_n^2 - T_{n-1}^2 \\&= \frac{3}{4\pi n}3^{2n}\left(1 + O\left(\frac1n\right)\right) - \frac{3}{4\pi(n-1)}3^{2n-2}\left(1 + O\left(\frac1n\right)\right) \\&= \frac{24n-27}{4\pi n(n-1)}3^{2n-2}\left(1 + O\left(\frac1n\right)\right) \\&= \frac{2}{\pi n}3^{2n-1}\left(1 + O\left(\frac1n\right)\right)
\end{split}\]
and the result follows.
\end{proof}

We are now ready to show recurrence for a non-backtracking random walk on $\Z^2$.

\begin{corollary}
A non-backtracking random walk on the infinite grid $\Z^2$ is recurrent.
\end{corollary}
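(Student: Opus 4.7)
The plan is to apply Lemma \ref{lem:conv} directly by extracting $\tilde p(2n)$ from the closed-walk count of Corollary \ref{cor:asymp}. First I would count the denominator: the total number of non-backtracking walks of length $2n$ starting at a fixed vertex $v \in \Z^2$. Since $\Z^2$ is $4$-regular, the initial step has $4$ choices and every subsequent step has exactly $4-1 = 3$ choices (all neighbors of the current vertex except the one just visited). Hence the total count is $4 \cdot 3^{2n-1}$, and this is independent of $v$ by vertex-transitivity.

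Next I would form the ratio
\[
\tilde p(2n) \;=\; \frac{\tilde A^{(2n)}(v,v)}{4 \cdot 3^{2n-1}}
\]
and substitute the asymptotic from Corollary \ref{cor:asymp}. The factor $3^{2n-1}$ cancels exactly, leaving
\[
\tilde p(2n) \;\sim\; \frac{1}{4 \cdot 3^{2n-1}} \cdot \frac{2}{\pi n}\, 3^{2n-1} \;=\; \frac{1}{2\pi n}.
\]
Finally, since $\sum_{n\ge 1} \frac{1}{2\pi n}$ diverges as a (scaled) harmonic series, the series $\sum_{k\ge 0} \tilde p(2k)$ diverges, and Lemma \ref{lem:conv} yields recurrence.

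I do not expect any serious obstacle here: all the analytic content (the exact formula for $\tilde A^{(2n)}(v,v)$, its connection to central trinomial coefficients, and the asymptotics of $T_n$) has already been carried out in Corollaries \ref{cor:diffsquares} and \ref{cor:asymp}. The only mild subtlety worth flagging is the justification that the denominator is $4\cdot 3^{2n-1}$ rather than $4^{2n}$ as in the simple random walk case; this is exactly the modification that makes $\tilde p(2n)$ decay like $1/n$ rather than faster, and it is what ultimately drives recurrence.
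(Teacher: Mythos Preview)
Your proposal is correct and essentially identical to the paper's own proof: both count the $4\cdot 3^{2n-1}$ total non-backtracking walks, divide the asymptotic from Corollary~\ref{cor:asymp} by this to obtain $\tilde p(2n)\sim 1/(2\pi n)$, and invoke Lemma~\ref{lem:conv} via divergence of the harmonic series. The only cosmetic addition in the paper is an explicit remark that $\tilde p(k)=0$ for odd $k$, which you implicitly assume by restricting to even lengths.
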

\begin{proof}
Let $\tilde p(k)$ denote the probability that a non-backtracking random walk on $\Z^2$ returns to its starting point after $k$ steps.  Note that the total number of non-backtracking random walks of length $k$ is
\[
4\cdot 3^{k-1}
\]
since there are 4 choices for the first step, and then 3 choices for each subsequent step since we must exclude the edge that would backtrack.  Note also that $\tilde p(k) = 0 $ for $k$ odd since a walk on $\Z^2$ returning to its starting point must contain an equal number of steps up as down, and an equal number of steps to the left as to the right.  So we need only consider $\tilde p(2k)$. The total number of non-backtracking walks of length $2k$ returning to their starting vertex $v$ is given by $\tilde A^{(2k)}(v,v)$, so by Corollary \ref{cor:asymp}, we obtain $\tilde p(2k)$ asymptotically is given by
\[
\tilde p(2k) \sim \frac{\frac{2}{\pi k}3^{2k-1}}{4\cdot 3^{2k-1}} = \frac{1}{2\pi k}.
\]
Therefore \[\sum_{k=0}^\infty \tilde p(2k) = \sum_{k=0}^\infty \frac{1}{2\pi k}\] which is divergent.  Therefore, by Lemma \ref{lem:conv}, the random walk is recurrent.
\end{proof}

With this, we have finished the proof of Theorem \ref{thm:nbp}.\\

As a final remark, we observe that Corollary \ref{cor:diffsquares} gives an exact enumeration of closed non-backtracking walks on $\Z^2$ in terms of a well-known combinatorial object, the central trinomial coefficients. There are numerous sets enumerated by central trinomial coefficients.  For example, $T_n$ counts the number of length $n$ paths from the point $(0,0)$ to the point $(n,0)$ that take steps to the right, diagonally up and to the right, or diagonally down and to the right.  Thus, this suggests some kind of combinatorial connection between such paths and closed non-backtracking walks on the $\Z^2$.  It would be of interest to come up with a bijective proof of this fact, or to otherwise illuminate this combinatorial connection. \\

{\bf Acknowledgment.} The author would like to thank Fan Chung for the initial suggestion to think about P\'olya's Theorem in the context of non-backtracking random walks, and for several helpful discussions throughout the process of writing this paper.


\end{document}